\documentclass[final,1p,times]{elsarticle}

\usepackage{microtype}
\usepackage{amsfonts,amssymb,amsmath,amsthm}
\usepackage{graphicx}
\usepackage[colorlinks=true]{hyperref}
\usepackage[english]{babel}
\usepackage{csquotes}
\usepackage{pgfplots}
\pgfplotsset{compat=1.18}
\usepackage{nicefrac}

\theoremstyle{plain}
\newtheorem{theorem}{Theorem}

\theoremstyle{definition}
\newtheorem{definition}{Definition}
\newtheorem{remark}{Remark}

\DeclareMathOperator*{\argmin}{arg\,min}

\journal{arXiv}

\begin{document}

\begin{frontmatter}
    \title{An Equivalence result for sketched Anderson Acceleration and sketched GMRES\tnoteref{t1}}

    \author{Alberto Bucci} %

    \affiliation{organization={School of Mathematics, University of Edinburgh},%
                addressline={Peter Guthrie Tait Rd}, 
                city={Edinburgh},
                postcode={EH9 3FD}, 
                state={Scotland},
                country={UK}}

\author{Fabio Durastante} %

\affiliation{organization={Dipartimento di Matematica, Università di Pisa},%
            addressline={Largo Bruno Pontecorvo, 5}, 
            city={Pisa},
            postcode={56127}, 
            state={PI},
            country={Italy}}
    \tnotetext[t1]{{AB is supported by the UK’s Engineering and Physical Sciences Research Council (EPSRC grant EP/Z533786/1). AB and FD acknowledge the INdAM GNCS project with CUP E53C25002010001. FD acknowledges the MUR Excellence Department Project awarded to the Department of Mathematics, University of Pisa, CUP I57G22000700001.}}

    \begin{abstract}
	In this paper we present an equivalence result between a randomized version of Anderson Acceleration and 
	of randomized GMRES for linear problems. Namely, we extend the classical result of~\citeauthor{MR2831068} (\citeyear{MR2831068})
	to the case in which the least-squares problem in Anderson Acceleration is solved in a sketched space 
	defined by a random projection. This equivalence suggests potential avenues for further research in the 
	design and analysis of randomized acceleration methods.
    \end{abstract}

    \begin{keyword}
    Sketching \sep GMRES \sep Anderson Acceleration 
    \MSC[2020] 65F10 \sep 65B99 \sep 60B20
    \end{keyword}
    
\end{frontmatter}

\section{Introduction}\label{sec:intro}

Anderson acceleration (AA), originally introduced in~\cite{MR184447}, is a widely used extrapolation strategy 
for fixed-point iterations and nonlinear equations; see, for instance,~\cite{MR4497809}. For linear problems, i.e.,
when applied to a linear fixed-point iteration for solving the linear system
\begin{equation}\label{eq:linear_system}
A \mathbf{x}=\mathbf{b}, \quad A\in\mathbb{R}^{n\times n}, \quad \mathbf{b}\in\mathbb{R}^n,
\end{equation}
AA is known to be closely 
related to Krylov subspace methods. Several convergence and 
equivalence properties are now well understood~\cite{MR3979953,MR3841161}. On the Krylov side, the Generalized Minimal Residual (GMRES) 
method is the reference method for nonsymmetric systems. Following a recent trend of interest in randomized numerical 
linear algebra~\cite{MR4189294}, both randomized and sketched variants of GMRES~\cite{MR4434356,MR4760964,MR4882482} and randomized alternating 
variants of AA~\cite{barnafi2026twolevelsketchingalternatinganderson} have been introduced. This motivates the present investigation, which extends the classical results of~\cite{MR2831068} to establish equivalence between appropriately defined variants of sketched AA and sketched GMRES.

\section{Sketched GMRES and Sketched Anderson Acceleration}\label{sec:sketched_gmres}

We start by recalling a randomized construction of GMRES based on sketched Gram--Schmidt orthogonalization and randomized Arnoldi iteration, as described in~\cite{MR4434356}.
For this construction, we need the notion of an $\varepsilon$-embedding for a subspace~\cite[\S 8.1]{MR4189294}.
\begin{definition}\label{def:epsilon_embedding}
A matrix $\Theta\in\mathbb{R}^{s\times n}$ is an $\varepsilon$-embedding for a subspace $\mathcal{V}\subset\mathbb{R}^n$ if
\[
(1-\varepsilon)\|\mathbf{v}\|_2^2 \leq \|\Theta \mathbf{v}\|_2^2 \leq (1+\varepsilon)\|\mathbf{v}\|_2^2, \quad \forall \mathbf{v}\in\mathcal{V}.
\]
We define $\| \cdot \|_{\Theta} \triangleq  \|\Theta \cdot\|_2$ and $\langle \cdot, \cdot \rangle_{\Theta} \triangleq  \langle \Theta \cdot, \Theta \cdot \rangle$ as the sketched norm and inner product.%
\end{definition}

Choose an embedding dimension $s\ll n$ and a sketching matrix $\Theta\in\mathbb{R}^{s\times n}$, so that inner products in the Krylov space are approximated by
the sketched inner product $\langle \mathbf{u},\mathbf{v}\rangle\approx\langle  \mathbf{u}, \mathbf{v}\rangle_\Theta$.
The method can be used to construct a basis that is orthonormal in the sketched inner product,
or exploiting the solution of the sketched least-squares problem to reduce 
the depth of the orthogonalization process, see, e.g.~\cite{MR4760964}. 
In the following, we recall the construction with full orthogonalization, 
and then comment on the equivalence result with a truncated orthogonalization process.
For a target iteration count $m$, and an initial guess $\mathbf{x}_0$, define $\mathbf{q}_1=(\mathbf{b} - A \mathbf{x}_0)/\|\mathbf{b} - A \mathbf{x}_0\|_\Theta $ and, for $i=1,\dots,m-1$, compute
$\mathbf{w}_{i+1}=A\mathbf{q}_i$, and then apply one sketched Gram--Schmidt step:
\begin{equation*}
\begin{split}
\mathbf{p}_{i+1}=\Theta \mathbf{w}_{i+1},\quad S_i=\Theta Q_i, \quad
[H]_{(1:i,i)}=\argmin_{\mathbf{y}\in\mathbb{R}^i}\|S_i \mathbf{y}-\mathbf{p}_{i+1}\|_2, \\
\mathbf{q}'_{i+1}=\mathbf{w}_{i+1}-Q_i [H]_{(1:i,i)},\qquad %
h_{i+1,i}=\|\mathbf{q}'_{i+1}\|_\Theta,\qquad \mathbf{q}_{i+1}=\mathbf{q}'_{i+1}/h_{i+1,i}.
\end{split}
\end{equation*}
Collecting coefficients gives an upper Hessenberg matrix $\overline{H}_m\in\mathbb{R}^{m\times (m-1)}$ and a basis matrix $Q_m=[\mathbf{q}_1,\dots,\mathbf{q}_m]$ satisfying the Arnoldi relation $A Q_{m-1}=Q_m \overline{H}_m$.

The randomized GMRES iterate is then obtained by solving the small least-squares problem
$\displaystyle \mathbf{y}_{m-1}=\argmin_{\mathbf{z}\in\mathbb{R}^{m-1}}\|\overline{H}_m \mathbf{z}-h_{1,0}\mathbf{e}_1\|_2$,
and setting $\mathbf{x}_{m-1}=Q_{m-1}\mathbf{y}_{m-1}$,
where $h_{1,0}=\| \mathbf{b} - A \mathbf{x}_0\|_\Theta$ and $\mathbf{e}_1$ is the first canonical vector. %
In exact arithmetic, if $\Theta$ is an $\varepsilon$-embedding for the generated subspace, this construction is equivalent to minimizing the sketched residual norm $\min_{\mathbf{z}\in\mathbb{R}^{m-1}}\|AQ_{m-1}\mathbf{z}-\mathbf{b}\|_\Theta$
and therefore it provides a quasi-optimal residual minimizer in the original norm up to the embedding distortion factor.

AA is a nonlinear extrapolation technique that can be applied to fixed-point iterations. 
Given a fixed-point iteration of the form $\mathbf{x}_{k+1} = G(\mathbf{x}_k)$, $k = 0, 1, 2, \ldots$, and $\mathbf{x}_0 \in \mathbb{R}^n$,
AA constructs an extrapolated iterate by combining the previous iterates and their images under $G$.
At iteration $k$, AA considers the last $m+1$ iterates $\mathbf{x}_{k-m}, \ldots, \mathbf{x}_k$ and their corresponding residuals $\mathbf{r}_i = G(\mathbf{x}_i) - \mathbf{x}_i$ for $i = k-m, \ldots, k$.
It then solves a least-squares problem to find coefficients $\alpha_i$ that minimize the norm of the extrapolated residual
\begin{equation}\label{eq:aa_least_squares}
\textstyle \min_{\boldsymbol{\alpha}} \left\| \sum_{i=k-m}^k \alpha_i \mathbf{r}_i \right\|_2 = \min_{\boldsymbol{\alpha}} \left\|  R_k \boldsymbol{\alpha} \right\|_2, \quad \text{ subject to } \sum_{i=k-m}^k \alpha_i = 1.
\end{equation}
The next iterate is then given by
$\mathbf{x}_{k+1} = \sum_{i=k-m}^k \alpha_i G(\mathbf{x}_i)$.
An equivalent unconstrained formulation is obtained by eliminating one coefficient with
$\alpha_k = 1-\sum_{j=k-m}^{k-1}\alpha_j$. Defining $\Delta R_k \triangleq  [\mathbf{r}_{k-m}-\mathbf{r}_k,\ldots,\mathbf{r}_{k-1}-\mathbf{r}_k]\in\mathbb{R}^{n\times m}$ and $\boldsymbol{\beta}\triangleq [\alpha_{k-m},\ldots,\alpha_{k-1}]^\top\in\mathbb{R}^{m}$, problem~\eqref{eq:aa_least_squares} is equivalent to
\begin{equation}\label{eq:aa_unconstrained_least_squares}
\textstyle \min_{\boldsymbol{\beta}\in\mathbb{R}^{m}}\|\mathbf{r}_k+\Delta R_k\boldsymbol{\beta}\|_2.
\end{equation}
When Anderson Acceleration is applied to the linear fixed-point iteration 
$\mathbf{x}_{k+1} = M\mathbf{x}_k + \mathbf{b}$, it is equivalent to GMRES 
applied to $(I-M)\mathbf{x} = \mathbf{b}$, where $I$ denotes the identity 
matrix of proper size, provided that: no truncation is 
performed, both methods start from the same initial guess $\mathbf{x}_0$, 
and the Krylov subspace dimension matches the Anderson Acceleration depth.
In terms of convergence histories, the equivalence result given by~{\cite[Theorem 2.2]{MR2831068}} says that, as 
long as the GMRES residual does not stagnate and is strictly decreasing at each step, 
untruncated AA is effectively following the same Krylov information as GMRES. More precisely, 
at iteration $k$, the AA affine combination of past iterates reproduces the $k$th GMRES iterate. 

\subsection{A randomized version of Anderson Acceleration}

Let us now consider a randomized version of Anderson Acceleration, where the least-squares problem~\eqref{eq:aa_least_squares} is solved 
in a sketched inner product defined again by a sketching matrix $\Theta\in\mathbb{R}^{s\times n}$ with $s\ll n$, namely
\begin{equation}\label{eq:sketched_aa_least_squares}
\textstyle \min_{\boldsymbol{\alpha}} \left\| \sum_{i=k-m}^k \alpha_i \mathbf{r}_i \right\|_\Theta = \min_{\boldsymbol{\alpha}} \left\|  \Theta R_k \boldsymbol{\alpha} \right\|_2, \quad \text{ subject to } \sum_{i=k-m}^k \alpha_i = 1.
\end{equation}
As in~\eqref{eq:aa_unconstrained_least_squares}, eliminating $\alpha_k = 1-\sum_{j=k-m}^{k-1}\alpha_j$ yields the equivalent unconstrained sketched problem.

This can be seen as a sketched version of Anderson Acceleration (sAA), where the extrapolation coefficients are computed by minimizing the norm of the extrapolated residual in the sketched space.
The main advantage of sketching is computational: by reducing the dimension of the least-squares problem, each AA step becomes cheaper while approximately preserving the geometry of the residual subspace. If $\Theta$ is an accurate embedding for $\operatorname{span}\{\mathbf r_{k-m},\dots,\mathbf r_k\}$, the coefficients computed from~\eqref{eq:sketched_aa_least_squares} remain close to those of standard AA~\cite[Theorem 4.1]{barnafi2026twolevelsketchingalternatinganderson}, so that similar convergence behavior may be expected at a lower per-iteration cost, much as in randomized GMRES~\cite{MR4434356}. The tradeoff is that the true residual norm is no longer minimized directly. As a result, convergence in the original norm need not be monotone, and excessively small sketches may produce poor extrapolation coefficients, slowing convergence or reducing robustness. In practice, larger sketches improve fidelity, while smaller sketches reduce computational cost.
\section{Equivalence result}\label{sec:equivalence}
The extension of {\cite[Theorem 2.2]{MR2831068}} to the sketched setting relies on the $\varepsilon$-embedding assumption of Definition~\ref{def:epsilon_embedding}, which ensures norm equivalence on the residual Krylov space. This allows the original argument to carry over and implies that sGMRES and sAA minimize the same residual functional on the same Krylov subspaces.

\begin{theorem}\label{thm:equivalence_sketched}
Let $
G(\mathbf x)=M\mathbf x+\mathbf b$, $M\in\mathbb R^{n\times n}$, with $A = I-M$ nonsingular. Let $\Theta\in\mathbb R^{s\times n}$ be a sketching matrix 
realizing a $\varepsilon$-embedding (Definition~\ref{def:epsilon_embedding}) for
$\mathcal{V} = \mathcal K_{k}(A,\mathbf r_0) = \operatorname{span}\{\mathbf r_0,\ldots,A^{k-1}\mathbf r_0\}$, where $\mathbf r_0\triangleq \mathbf b-A\mathbf x_0$. 
Let $\mathbf x_j^{\mathrm{sGMRES}}$ denote the iterates of sketched GMRES applied to~\eqref{eq:linear_system} that is, $\mathbf x_j^{\mathrm{sGMRES}}=\mathbf x_0+\mathbf z_j$,
for $\mathbf z_j
=\argmin_{\mathbf z\in\mathcal K_j(A,\mathbf r_0)}
\|\mathbf r_0-A\mathbf z\|_\Theta
$. Let $\mathbf x_j^{\mathrm{sAA}}$ denote the iterates of untruncated sketched Anderson acceleration, where the coefficients at step $j$ are obtained from~\eqref{eq:sketched_aa_least_squares}. Assume that for some $k>0$,
\begin{equation}\label{eq:residual_decrease}
\mathbf r_{k-1}^{\mathrm{sGMRES}}\neq 0,
\text{ and }
\|\mathbf r_j^{\mathrm{sGMRES}}\|_\Theta
<
\|\mathbf r_{j-1}^{\mathrm{sGMRES}}\|_\Theta,
\qquad
j=1,\ldots,k,
\end{equation}
where $\mathbf r_j^{\mathrm{sGMRES}}=\mathbf b-A\mathbf x_j^{\mathrm{sGMRES}}$.
Then $\sum_{i=0}^{k}
\alpha_i^{(k)}
\mathbf x_i^{\mathrm{sAA}}
=\mathbf x_k^{\mathrm{sGMRES}}$, and $\mathbf x_{k+1}^{\mathrm{sAA}}
=G\!\left(\mathbf x_k^{\mathrm{sGMRES}}\right)$.
\end{theorem}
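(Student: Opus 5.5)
We will adapt the induction argument of Walker and Ni~\cite[Theorem 2.2]{MR2831068}, replacing the Euclidean inner product by the sketched inner product $\langle\cdot,\cdot\rangle_\Theta$. The one thing the sketch must supply is that $\|\cdot\|_\Theta$ is a genuine norm on the relevant space. By Definition~\ref{def:epsilon_embedding}, $\Theta$ is an $\varepsilon$-embedding for $\mathcal K_k(A,\mathbf r_0)$, so $\|\mathbf v\|_\Theta^2\ge(1-\varepsilon)\|\mathbf v\|_2^2>0$ for every nonzero $\mathbf v$ in that space. Hence $\langle\cdot,\cdot\rangle_\Theta$ is an inner product on $\mathcal K_k(A,\mathbf r_0)$, and also on $\mathbf r_0+A\mathcal K_j\subset\mathcal K_{j+1}$ for $j\le k-1$. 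It follows that each sketched least-squares problem over these spaces has a unique minimizer, so the sGMRES iterates and their residuals are well defined. We also write every AA residual as $\mathbf r_i=G(\mathbf x_i)-\mathbf x_i=\mathbf b-A\mathbf x_i$, so AA residuals and GMRES residuals are the same affine function of the iterate.

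The core of the proof is an induction on $j=0,\dots,k$ for the claim
\[
\mathbf x_j^{\mathrm{sAA}}\in\mathbf x_0+\mathcal K_j(A,\mathbf r_0),\qquad
\operatorname{span}\{\mathbf x_0^{\mathrm{sAA}},\dots,\mathbf x_j^{\mathrm{sAA}}\}\ \text{affinely spans } \mathbf x_0+\mathcal K_j(A,\mathbf r_0),
\]
more precisely that $\{\mathbf x_i^{\mathrm{sAA}}-\mathbf x_0\}_{i=1}^{j}$ is a basis of $\mathcal K_j$. Given the claim at level $j$, the constrained problem~\eqref{eq:sketched_aa_least_squares} runs over exactly the affine set $\mathbf x_0+\mathcal K_j$. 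Since $\sum\alpha_i\mathbf r_i=\mathbf b-A\sum\alpha_i\mathbf x_i$ whenever $\sum\alpha_i=1$, the problem amounts to minimizing $\|\mathbf b-A\mathbf x\|_\Theta$ over that set. This is the sGMRES problem, so $\sum_i\alpha_i^{(j)}\mathbf x_i^{\mathrm{sAA}}=\mathbf x_j^{\mathrm{sGMRES}}$; uniqueness of the minimizer means the combination of iterates is unique even if $\boldsymbol\alpha$ is not. Then
\[
\mathbf x_{j+1}^{\mathrm{sAA}}=\sum\alpha_iG(\mathbf x_i^{\mathrm{sAA}})=G(\mathbf x_j^{\mathrm{sGMRES}})=\mathbf x_j^{\mathrm{sGMRES}}+\mathbf r_j^{\mathrm{sGMRES}}.
\]
At $j=k$ this gives both conclusions of the theorem.

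The step I expect to be the main obstacle is propagating the basis property from $j$ to $j+1$ for $j<k$. We have $\mathbf x_{j+1}^{\mathrm{sAA}}-\mathbf x_0=\mathbf z_j+\mathbf r_j^{\mathrm{sGMRES}}$, where $\mathbf z_j\in\mathcal K_j$ and $\mathbf r_j^{\mathrm{sGMRES}}\in\mathbf r_0+A\mathcal K_j\subset\mathcal K_{j+1}$. So we must show that $\mathbf r_j^{\mathrm{sGMRES}}\notin\mathcal K_j$. Suppose instead that $\mathbf r_j\in\mathcal K_j$. Then $\mathcal K_{j+1}=\mathcal K_j+\operatorname{span}\{A^j\mathbf r_0\}$ and the residual structure would force $\mathbf z_{j-1}$-type degeneracy. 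Following Walker--Ni, I would use the characterization that $\mathbf r_j$ is $\Theta$-orthogonal to $A\mathcal K_j$. If $\mathbf r_j\in\mathcal K_j$, write $\mathbf r_j=\mathbf r_0-A\mathbf z_j$ and decompose along the Krylov basis. Then $\mathbf r_0\in\mathcal K_j\cap(\mathcal K_1+A\mathcal K_j)$ gives a relation that, combined with the $\Theta$-orthogonality, yields either $\mathbf r_j=\mathbf r_{j-1}$ (stagnation, excluded by~\eqref{eq:residual_decrease}) or $\mathbf r_j=0$ (excluded since $\mathbf r_{k-1}\ne0$ and residual norms decrease). Concretely: $\mathbf r_j-\mathbf r_{j-1}\in A\mathcal K_j$ and $\mathbf r_j\perp_\Theta A\mathcal K_j$, so $\|\mathbf r_{j-1}\|_\Theta^2=\|\mathbf r_j\|_\Theta^2+\|\mathbf r_j-\mathbf r_{j-1}\|_\Theta^2$, and strict decrease makes $\mathbf r_j-\mathbf r_{j-1}\neq0$. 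I would then show that if $\mathbf r_j\in\mathcal K_j$, then $A\mathcal K_j\subseteq\mathcal K_j$ (via the new direction $A^j\mathbf r_0$ having nonzero coefficient, which is what strict decrease buys), hence $\mathcal K_j$ is $A$-invariant. In that case $A$ nonsingular puts the exact solution in $\mathbf x_0+\mathcal K_j$, so $\mathbf r_j=0$, contradicting $\mathbf r_{k-1}\neq0$ for $j\le k-1$. All inner products and norms in this step live inside $\mathcal K_k$, where the embedding makes $\Theta$-orthogonality behave exactly as in the Euclidean proof.
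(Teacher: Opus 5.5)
Your proposal is correct and follows essentially the paper's argument: the same induction showing that $\{\mathbf x_i^{\mathrm{sAA}}-\mathbf x_0\}_{i=1}^{j}$ spans $\mathcal K_j(A,\mathbf r_0)$, which reduces to showing $\mathbf r_j^{\mathrm{sGMRES}}\notin\mathcal K_j(A,\mathbf r_0)$. That step goes through the coefficient of $A^j\mathbf r_0$: zero means stagnation at step $j$, and nonzero, together with $\mathbf r_j^{\mathrm{sGMRES}}\in\mathcal K_j$, forces $A$-invariance of $\mathcal K_j$. The only differences are that you close the invariant case via nonsingularity of $A$, getting $\mathbf r_j^{\mathrm{sGMRES}}=0$ and contradicting $\mathbf r_{k-1}^{\mathrm{sGMRES}}\neq 0$, whereas the paper uses the stagnation $\|\mathbf r_{j+1}^{\mathrm{sGMRES}}\|_\Theta=\|\mathbf r_j^{\mathrm{sGMRES}}\|_\Theta$ at step $j+1$; your $\Theta$-orthogonality/Pythagorean detour is not needed.
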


\begin{proof}
Let $\mathbf r(\mathbf x)\triangleq \mathbf b-A\mathbf x$. Since $A=I-M$, we have
$\mathbf r(\mathbf x)
=\mathbf b-(I-M)\mathbf x
=M\mathbf x+\mathbf b-\mathbf x
=G(\mathbf x)-\mathbf x$. Thus the residuals used in sAA coincide exactly with the residuals 
of the linear system. For every iterate $\mathbf x_i^{\mathrm{sAA}}$, 
define
\begin{equation}\label{eq:residual_representation}
\mathbf r_i{^{\mathrm{sAA}}}\triangleq G(\mathbf x_i^{\mathrm{sAA}})-\mathbf x_i^{s\mathrm{AA}}
=\mathbf r(\mathbf x_i^{s\mathrm{AA}})
=\mathbf r_0-A(\mathbf x_i^{s\mathrm{AA}}-\mathbf x_0),
\end{equation}
{resulting $\mathbf{r}_0 = \mathbf r_0^{\mathrm{sGMRES}} = \mathbf{r}_0^{\mathrm{sAA}}$}.
At iteration $k$, Anderson acceleration computes coefficients $\alpha^{(k)}$ such that
\begin{equation}\label{eq:least_square_sketched}
\boldsymbol{\alpha}^{(k)} = (\alpha_0^{(k)},\ldots,\alpha_k^{(k)})^\top = \argmin_{\boldsymbol{\alpha}\in\mathbb{R}^{k+1} \,:\, \sum_{i=0}^{k}\alpha_i=1}\, {\textstyle \left\|
\sum_{i=0}^{k}
\alpha_i
\mathbf r_i{^{\mathrm{sAA}}}
\right\|_\Theta.}
\end{equation}
Using the representation~\eqref{eq:residual_representation}, and the fact that the coefficients satisfy the affine constraint, we write
\[
\textstyle {\textbf{g}_k \triangleq }\sum_{i=0}^{k}\alpha_i^{(k)}\mathbf r_i{^{\mathrm{sAA}}}
=\sum_{i=0}^{k}\alpha_i^{(k)}\left(\mathbf r_0-A(\mathbf x_i^{\mathrm{sAA}}-\mathbf x_0)\right)
=\mathbf r_0
-A\left(
\sum_{i=0}^{k}\alpha_i^{(k)}(\mathbf x_i^{\mathrm{sAA}}-\mathbf x_0)
\right).
\]
If we denote by $\mathbf y\triangleq \sum_{i=1}^{k}\alpha_i^{(k)}(\mathbf x_i^{\mathrm{sAA}}-\mathbf x_0)$,
then the sketched Anderson least-squares problem~\eqref{eq:least_square_sketched} would be equivalent to the sGMRES minimization $\min_{\mathbf y\in\mathcal S_k}
\|\mathbf r_0-A\mathbf y\|_\Theta$, whenever $\mathcal S_k\triangleq \operatorname{span}\{\mathbf x_1^{\mathrm{sAA}}-\mathbf x_0,\dots,\mathbf x_k^{\mathrm{sAA}}-\mathbf x_0\}$ is equal to $\mathcal{K}_{k}(A,\mathbf{r}_0)$, in other words
\[
\textstyle \mathbf{x}_{j+1}^{\mathrm{sAA}} = \sum_{i=0}^j \alpha_i^{(j)} G(\mathbf{x}_i^{\mathrm{sAA}}) = G\left(\sum_{i=0}^j \alpha_i^{(j)} \mathbf{x}_i^{\mathrm{sAA}} \right) = G\left( \mathbf{x}_j^{\mathrm{sGMRES}}\right).
\]
To conclude, we now need to prove the identity $\mathcal S_k=\mathcal K_k(A,\mathbf r_0)$.
 
We prove by induction that $\mathcal S_j=\mathcal K_j(A,\mathbf r_0)$, $j=1,\dots,k
$. For $j=1$, sAA coincides with the underlying fixed-point iteration. Since
$\mathbf x_1^{\mathrm{sAA}}=G(\mathbf x_0)=\mathbf x_0+\mathbf r_0$, we obtain
$\mathbf x_1^{\mathrm{sAA}}-\mathbf x_0=\mathbf r_0$.
Therefore
$\mathcal S_1=\operatorname{span}\{\mathbf r_0\}=\mathcal K_1(A,\mathbf r_0)$.
Assume now that $\mathcal S_j=\mathcal K_j(A,\mathbf r_0)$ for some $j<k$.
Let $\mathbf y_j\triangleq \sum_{i=1}^{j}\alpha_i^{(j)}(\mathbf x_i^{\mathrm{sAA}}-\mathbf x_0)$ 
be the Anderson minimizer~\eqref{eq:least_square_sketched} at step $j$.

\[
\begin{split}
\mathbf{x}_{j+1}^{\mathrm{sAA}} - \mathbf{x}_0 & %
= M \mathbf{x}_j^{\mathrm{sGMRES}} + \mathbf{b} - \mathbf{x}_0 = \mathbf{b} - (I-M)\mathbf{x}_j^{\mathrm{sGMRES}} + \mathbf{x}_j^{\mathrm{sGMRES}} - \mathbf{x}_0 
 = \mathbf{r}_j^{\mathrm{sGMRES}} + \mathbf{y}_j,
\end{split}
\]
By the induction hypothesis, $\mathbf y_j\in \mathcal K_j(A,\mathbf r_0)
\subseteq
\mathcal K_{j+1}(A,\mathbf r_0)$, while the previous argument shows that
$\mathbf r_j^{\mathrm{{sGMRES}}}
\in
\mathcal K_{j+1}(A,\mathbf r_0)$. Hence $\mathbf x_{j+1}^{\mathrm{{s}AA}}-\mathbf x_0
=
\mathbf y_j+\mathbf r_j^{\mathrm{{sGMRES}}}
\in
\mathcal K_{j+1}(A,\mathbf r_0)$.
Consequently, $\mathcal S_{j+1}\subseteq\mathcal K_{j+1}(A,\mathbf r_0)$. 
To show that $\mathcal S_{j+1}$ expands strictly in dimension over $\mathcal S_j$, suppose by contradiction that $\mathbf x_{j+1}^{\mathrm{sAA}}-\mathbf x_0\in\mathcal S_j$. 
By the induction hypothesis, $\mathcal S_j = \mathcal K_j(A, \mathbf r_0)$. Since $\mathbf y_j \in \mathcal K_j(A, \mathbf r_0)$ and we assumed $\mathbf x_{j+1}^{\mathrm{sAA}}-\mathbf x_0 \in \mathcal K_j(A, \mathbf r_0)$, the relation $\mathbf{x}_{j+1}^{\mathrm{sAA}} - \mathbf{x}_0 = \mathbf{r}_j^{\mathrm{sGMRES}} + \mathbf{y}_j$ dictates that $\mathbf r_j^{\mathrm{sGMRES}} \in \mathcal K_j(A, \mathbf r_0)$. By definition, the sGMRES residual can be written as $\mathbf r_j^{\mathrm{sGMRES}} = p_j(A)\mathbf r_0$, where $p_j$ is a polynomial of degree at most $j$ with $p_j(0) = 1$. Because $\mathbf r_j^{\mathrm{sGMRES}} \in \mathcal K_j(A, \mathbf r_0)$%
, then 
\begin{equation}\label{eq:res_expansions}
    \textstyle \mathbf{r}_j = \mathbf{r}_0 - A \sum_{i=0}^{j-1} A^i \mathbf{r}_0 \gamma_i = \mathbf{r}_0 - A^j \mathbf{r}_0 \gamma_{j-1} -  \sum_{i=1}^{j-1} A^i \mathbf{r}_0 \gamma_{i-1},
\end{equation}
and now either the coefficient $\gamma_{j-1} = 0$ of $A^j \mathbf r_0$ term in $p_j(A)$ or it is different from zero. In the first case, the true degree of $p_j$ is at most $j-1$. Since $p_j(0) = 1$, this polynomial generates a residual corresponding to an element $\mathbf z_j \in \mathcal K_{j-1}(A, \mathbf r_0)$, such that $p_j(A)\mathbf r_0 = \mathbf r_0 - A\mathbf z_j$. Consequently, the sGMRES minimizer over the space $\mathcal K_j(A, \mathbf r_0)$ actually resides in the smaller subspace $\mathcal K_{j-1}(A, \mathbf r_0)$. This forces the residual norms to be equal:
\begin{equation}\label{eq:no_decrease_1}
\|\mathbf r_j^{\mathrm{sGMRES}}\|_\Theta = \min_{\mathbf y\in\mathcal K_{j}(A,\mathbf r_0)} \|\mathbf r_0-A\mathbf y\|_\Theta = \min_{\mathbf y\in\mathcal K_{j-1}(A,\mathbf r_0)} \|\mathbf r_0-A\mathbf y\|_\Theta = \|\mathbf r_{j-1}^{\mathrm{sGMRES}}\|_\Theta.
\end{equation}
For the case $\gamma_{j-1} \neq 0$, the expansion~\eqref{eq:res_expansions} proves that $\mathcal{K}_{j+1}(A,\mathbf{r}_0) = \mathcal{K}_{j}(A,\mathbf{r}_0)$ because $A^j \mathbf{r}_0 \gamma_{j-1}$ can be written as a lower degree polynomial, and thus
\begin{equation}\label{eq:no_decrease_2}
\|\mathbf r_j^{\mathrm{sGMRES}}\|_\Theta = \min_{\mathbf y\in\mathcal K_{j}(A,\mathbf r_0)} \|\mathbf r_0-A\mathbf y\|_\Theta = \min_{\mathbf y\in\mathcal K_{j+1}(A,\mathbf r_0)} \|\mathbf r_0-A\mathbf y\|_\Theta = \|\mathbf r_{j+1}^{\mathrm{sGMRES}}\|_\Theta.
\end{equation}
However, both~\eqref{eq:no_decrease_1} and~\eqref{eq:no_decrease_2} contradict the strict residual decrease assumption~\eqref{eq:residual_decrease}, which requires $\|\mathbf r_j^{\mathrm{sGMRES}}\|_\Theta < \|\mathbf r_{j-1}^{\mathrm{sGMRES}}\|_\Theta$ for $j=1,\ldots,k$. We conclude that $\mathbf x_{j+1}^{\mathrm{sAA}}-\mathbf x_0 \notin \mathcal S_j$. Hence, $\dim(\mathcal S_{j+1}) = \dim(\mathcal S_j)+1 = j+1$. Since $\mathcal S_{j+1}\subseteq\mathcal K_{j+1}(A,\mathbf r_0)$ and both spaces have dimension $j+1$, it follows that $\mathcal S_{j+1} = \mathcal K_{j+1}(A,\mathbf r_0)$. This completes the induction and proves that $\mathcal S_k=\mathcal K_k(A,\mathbf r_0)$.
\end{proof}

\begin{remark}
The equivalence depends only on the Krylov subspace $\mathcal{K}_k(A,\mathbf{r}_0)$ and the nondegeneracy of the sketched norm, not the chosen basis. Thus, any variant minimizing the same sketched residual over this subspace, such as sketched GMRES with short-orthogonalization, preserves the equivalence.
\end{remark}

Truncating AA to a memory depth $m$ breaks its exact 
equivalence with GMRES~\cite{desterck2023andersonaccelerationkrylovmethod}. 
However, because AA still acts as a residual minimization method over a 
Krylov-like space, its $k$th iteration residual is bounded below by that of 
full, unrestarted GMRES, which minimizes over the entire subspace 
$\mathcal K_k(A,\mathbf r_0)$. Crucially, this differs from a comparison 
to restarted GMRES($m$); the two methods generate distinct 
approximation spaces with a more complex 
relationship~\cite{desterck2023andersonaccelerationkrylovmethod}.

\section{Numerical experiments}\label{sec:numerical_experiments}

We numerically validate here the classical result in 
{\cite[Theorem 2.2]{MR2831068}} and the new one in Theorem~\ref{thm:equivalence_sketched} by monitoring the identity $\mathbf{x}_{k+1}^{\mathrm{AA}}=G\left(\mathbf{x}_k^{\mathrm{GMRES}}\right)$ and $\mathbf{x}_{k+1}^{\mathrm{sAA}}=G\left(\mathbf{x}_k^{\mathrm{sGMRES}}\right)$, respectively, at each iteration
on the test problem \texttt{airfoil1.mtx} ($4253\times 4253$), 
with random right-hand side, initial guess $\mathbf{x}_0=\mathbf{0}$, 
and $300$ iterations. Both theorem predict, for each admissible index $k$, the iterate identity
$\mathbf{x}_{k+1}^{\mathrm{AA}}=G\left(\mathbf{x}_k^{\mathrm{GMRES}}\right)$ and
$\mathbf{x}_{k+1}^{\mathrm{sAA}}=G\left(\mathbf{x}_k^{\mathrm{sGMRES}}\right)$.
To assess this statement directly, we monitor the relative discrepancies for $k = 0,\ldots,299$
\(
\eta_k =  \nicefrac{\left\|\mathbf{x}_{k+1}^{\mathrm{AA}}-G\left(\mathbf{x}_k^{\mathrm{GMRES}}\right)\right\|_2}{\max\!\left(\left\|G\left(\mathbf{x}_k^{\mathrm{GMRES}}\right)\right\|_2,\,\varepsilon_{\mathrm{mach}}\right)},\) and
\( \eta_k^{\mathrm{s}} =  \nicefrac{\left\|\mathbf{x}_{k+1}^{\mathrm{sAA}}-G\left(\mathbf{x}_k^{\mathrm{sGMRES}}\right)\right\|_2}{\max\!\left(\left\|G\left(\mathbf{x}_k^{\mathrm{sGMRES}}\right)\right\|_2,\,\varepsilon_{\mathrm{mach}}\right)},\)
where $\varepsilon_{\mathrm{mach}}$ is the machine precision of the floating-point 
arithmetic used in the computations. The denominator is included to avoid division 
by zero and to provide a relative measure of the discrepancy. We have implemented
everything in \textsc{Julia}, and we have used the \texttt{Float64} type for double precision 
(53-bit) and the \texttt{BigFloat} type for quadruple precision (113-bit). 
The sketching matrix $\Theta$ is a Gaussian embedding with $s=100$ rows. 
\begin{figure}[htbp]
\centering
\begin{tikzpicture}
\begin{semilogyaxis}[
	width=0.45\textwidth,
	height=0.28\textwidth,
	at ={(0.0,0)},
	xlabel={$k$},
	ylabel={$\eta_k$},
	grid=major,
	ymin=1e-35,
	legend style={draw=none, fill=none, at={(0.5,0.28)}, align=left, anchor=south, font=\small},
]
\addplot+[thick, mark=none] table[x=iter, y=eq_relerr, col sep=comma] {residuals_standard_airfoil1_randomrhs_maxit300_tol1.0e-08_seed1_m300.csv};
\addlegendentry{Double precision (53-bit)}

\addplot+[thick, dashed, mark=none] table[x=iter, y=eq_relerr, col sep=comma] {residuals_standard_airfoil1_randomrhs_maxit300_tol1.0e-08_seed1_m300_bf113.csv};
\addlegendentry{Quadruple precision (113-bit)}

\end{semilogyaxis}

\begin{semilogyaxis}[
	width=0.45\textwidth,
	height=0.28\textwidth,
	at ={(0.5\textwidth,0)},
	xlabel={$k$},
	ylabel={$\eta_k^s$},
	legend style={at={(0.5,0.1)},anchor=south},
	grid=major,
	ymin=1e-35,
]
\addplot+[thick, mark=none] table[x=iter, y=eq_relerr, col sep=comma] {residuals_airfoil1_randomrhs_maxit300_tol1.0e-08_k300_seed1_m300.csv};

\addplot+[thick, dashed, mark=none] table[x=iter, y=eq_relerr, col sep=comma] {residuals_airfoil1_randomrhs_maxit300_tol1.0e-08_k300_seed1_m300_bf113.csv};
\end{semilogyaxis}

\end{tikzpicture}
\caption{Validation of Theorem~\ref{thm:equivalence_sketched}: relative discrepancies $\eta_k$ for $\mathbf{x}_{k+1}^{\mathrm{AA}}=G(\mathbf{x}_k^{\mathrm{GMRES}})$ and $\eta_k^s$ for $\mathbf{x}_{k+1}^{\mathrm{sAA}}=G(\mathbf{x}_k^{\mathrm{sGMRES}})$ over all tested iterations on \texttt{airfoil1.mtx}.}
\label{fig:conv_standard_airfoil1}
\end{figure}
The results are shown in Figure~\ref{fig:conv_standard_airfoil1}, where we 
can see that the identity is satisfied up to the used precision with a deterioration given by the error accumulation along the iterates.

\section{Conclusion}\label{sec:conclusions}

We have presented a sketched variant of Anderson acceleration and established its equivalence with sketched GMRES for linear fixed-point iterations. The result extends the classical Anderson acceleration--GMRES correspondence to a randomized setting and shows that, under standard non-stagnation assumptions, the Anderson extrapolated iterate at step $k$ coincides with the $k$th sketched GMRES iterate. Numerical experiments support the theoretical analysis and illustrate the agreement between the two approaches. Possible directions for future work include the study of limited-memory variants and a more detailed investigation of other connected sequence acceleration strategies~\cite{MR3841161}, such as Shanks sequences~\cite{MR3841161} and the $\varepsilon$-vector algorithm~\cite{MR4530335}.
\smallskip

\bibliographystyle{elsarticle-num-names} 
\bibliography{equivaa}

\end{document}